\newtheorem{thm}{Theorem}[section]
\theoremstyle{remark}
\newtheorem{rem}[thm]{Remark}
\theoremstyle{definition}
\newtheoremstyle{Claim}{}{}{\itshape}{}{\itshape\bfseries}{:}{ }{#1}
\theoremstyle{Claim}
\newcommand{\T}{{\mathbb{T}}}
\newcommand{\R}{\mathbb{R}}
\newcommand{\eps}{\varepsilon}
\theoremstyle{plain}
\begin{document}

\title[]{Rate of convergence of the vanishing viscosity method for Hamilton-Jacobi equations with Neumann boundary conditions}

\author{Alessandro Goffi}
\address{Dipartimento di Matematica e Informatica ``Ulisse Dini'', Universit\`a degli Studi di Firenze, 
viale G. Morgagni 67/A, 50134 Firenze (Italy)}
\curraddr{}
\email{alessandro.goffi@unifi.it}

\thanks{The author is member of the Gruppo Nazionale per l'Analisi Matematica, la Probabilit\`a e le loro Applicazioni (GNAMPA) of the Istituto Nazionale di Alta Matematica (INdAM), and he was partially supported by the INdAM-GNAMPA project 2025. The author wishes to thank the anonymous referee for a careful review which meant a significant
improvement of the original version of the manuscript.
}

\date{\today}

\subjclass[2020]{35F21,35F31,41A25}
\keywords{Adjoint method, Vanishing viscosity approximation, Neumann boundary condition}

\date{\today}

\begin{abstract}
We study the quantitative small noise limit in the $L^\infty$ norm of certain time-dependent Hamilton-Jacobi equations equipped with Neumann boundary conditions, depending on the regularity of the data and the geometric properties of the domain. We first provide a $\mathcal{O}(\sqrt{\eps})$ rate of convergence for Hamilton-Jacobi equations with locally Lipschitz Hamiltonians posed on convex domains of the Euclidean space. We then enhance this speed of convergence in the case of quadratic Hamiltonians proving one-side rates of order $\mathcal{O}(\eps)$ and $\mathcal{O}(\eps^\beta)$, $\beta\in(1/2,1)$. The results exploit recent $L^1$ contraction estimates for Fokker-Planck equations with bounded velocity fields on unbounded domains used to derive differential Harnack estimates for the corresponding Neumann heat flow.
\end{abstract}

\maketitle

\section{Introduction}

In this note we study the rate of convergence of the vanishing viscosity approximation for the first-order (backward) Hamilton-Jacobi equation equipped with homogeneous Neumann boundary condition
\begin{equation*}\label{HJintro}
\begin{cases}
-\partial_t u+H(Du(x,t))=f(x,t)&\text{ in }\Omega\times(0,T),\\
\partial_\nu u=0& \text{ on }\partial\Omega\times(0,T),\\
u(x,T)=u_T(x)&\text{ in }\Omega,
\end{cases}
\end{equation*}
under the main assumption that $\Omega\subset\R^n$ is \textit{unbounded}. Heuristically, this process amounts to study the speed of convergence of $v^\eps=\partial_\eps u_\eps$ (or, formally, the behavior of $\frac{u_{\eps+\eta}-u_\eps}{\eta}$ as $\eta\to0^+$), where, denoting by $\eps>0$ the (small) viscosity parameter, $u_\eps$ solves the viscous problem
\begin{equation*}\label{HJviscintro}
\begin{cases}
-\partial_t u_\eps-\eps\Delta u_\eps+H(Du_\eps(x,t))=f(x,t)&\text{ in }\Omega\times(0,T),\\
\partial_\nu u_\eps=0& \text{ on }\partial\Omega\times(0,T),\\
u_\eps(x,T)=u_T(x)&\text{ in }\Omega.
\end{cases}
\end{equation*}
We will show that in the case of solutions satisfying a priori Lipschitz bounds (independent of $\eps$) and $H\in W^{1,\infty}_{\mathrm{loc}}(\R^n)$, if $\Omega$ is convex we have
\[
\|u_\eps-u\|_{L^\infty(\Omega\times(0,T))}\leq C\sqrt{\eps T}.
\]
This bound is of perturbative nature, i.e. it is a consequence of the linear part of the PDE, while the nonlinearity plays a minor role. In addition, this quantitative bound of order $\mathcal{O}(\sqrt{\eps T})$ is optimal. In fact, it agrees with that of the vanishing viscosity limit of the heat equation. It is easy to see that for $H=f=0$ one can write via \cite[Lemma II.1.3]{EngelNagel}, see also \cite[Lemma 1.14]{BertozziMajda}, for $g(\cdot,t)=u_\eps(\cdot,T-t)$
\[
\|g(\cdot)-u_T\|_{L^\infty}=\|e^{t\eps\Delta}u_T-u_T\|_{L^\infty}\leq C\|u_T\|_{W^{1,\infty}}\sqrt{\eps T}.
\]
If, in addition, $H$ is quadratic and $\Delta u_\eps\leq c(t)\in L^1(0,T)$, we can boost the rate on both sides and get the quantitative estimate
\[
-C\eps^\beta\leq u_\eps-u\leq \eps\int_0^T c(t),\ \beta\in(1/2,1).
\]
The results are based on duality methods and make use of some $L^1$-contraction estimates for Fokker-Planck equations discovered recently in \cite{GoffiTralli} within the analysis of geometric estimates for the heat flow. For this reason, we need to add the \textit{geometric} assumption that $\Omega$ satisfies a (uniform) interior cone property, cf. \eqref{intcone}. We mention that our estimates are new even when $\Omega$ is bounded and convex: in this case we do not need $\Omega$ to satisfy the interior cone condition, see \cite{GoffiTralli}. \\

The research on quantitative estimates for the vanishing viscosity approximation of Hamilton-Jacobi equations has recently received an incresing interest, mostly in connection with the quantitative study of the convergence problem in Mean Field Control. Classical results provide $\mathcal{O}(\sqrt{\eps})$ rates for problems posed in $\R^n$ or $\T^n$ using maximum principle methods \cite{CL84}, while the more recent \cite{EvansARMA} provides the same result using integral techniques. This rate can be boosted through estimates in weaker $L^p$-norms, cf. \cite{CGM}, via nonlocal approximations \cite{DroniouImbert,Goffi}, when $H$ is uniformly or strictly convex or under appropriate smallness conditions \cite{CirantGoffiIndiana}.\\
The case of problems with boundary conditions is much less studied and carried out for smooth, convex and bounded domains. When $H$ is locally Lipschitz, the convergence problem of the vanishing viscosity method was addressed in \cite{L82book,LionsDuke,PerthameSanders} for stationary equations with zero-th order coefficients, but little is known for evolutionary Hamilton-Jacobi equations. In particular, we provide a time-dependent counterpart of the quantitative convergence result found in \cite[Theorem 2]{PerthameSanders}. Our main novelty with respect to \cite{PerthameSanders} is the possibility of allowing unbounded domains and the treatment of evolution problems. We also mention other results obtained in rather different settings: \cite{Tran2011} for Dirichlet problems in bounded domains, \cite{Son} for problems with state constraints and \cite{Jakobsen} for general Neumann-type boundary conditions and H\"older continuous solutions.

Our results provide new advances in at least two directions, allowing the possibility of \textit{unbounded convex sets} and the treatment of \textit{time-dependent} problems: for us the result in the stationary case will be a byproduct, as in \cite{Tran2011} and \cite[Section 4.3.1]{CGM}. This note emphasizes that for problems with boundary conditions the geometry of the domain, together with the regularity of the data and the geometric assumptions of $H$, play an important role in the study of the rate of convergence of the vanishing viscosity approximation.\\
\par\smallskip

\textit{Outline}. Section \ref{sec;prel} collects some preliminary results for the Cauchy-Neumann problem of viscous Fokker-Planck equations. Section \ref{sec;rate} provides the main results of the paper: the first is a rate of convergence when the Hamiltonian is locally Lipschitz, while the second improved rate is obtained under the stronger semi-superharmonic condition on the solution $u_\eps$. Section \ref{sec;bounds} provides the proofs of these latter second order bounds under certain conditions on the Hamiltonian.

\section{Preliminary results on Fokker-Planck equations with Neumann boundary conditions}\label{sec;prel}
In this section we discuss well-posedness results and $L^1$ conservation properties of solutions for the forward Cauchy-Neumann problem solved by the function $\rho_\eps:=\rho_\eps(x,t)$
\begin{equation}\label{fp}
\begin{cases}
\partial_t\rho_\eps-\eps\Delta\rho_\eps+\mathrm{div}(b(x,t)\rho_\eps)=0&\text{ in }Q_\tau=\Omega\times(\tau,T),\\
\eps \partial_\nu \rho_\eps-\rho_\eps b(x,t)\cdot \nu=0&\text{ on }\Sigma_\tau:=\partial\Omega\times(\tau,T),\\
\rho_\eps(x,\tau)=\rho_\tau(x)&\text{ in }\Omega,
\end{cases}
\end{equation}
 where $\rho_\tau\in C_0^\infty(\Omega)$ and $\nu$ is the outward normal to the boundary of $\Omega$. We assume that $\Omega\subseteq\R^n$ is a domain of $\R^n$ with smooth boundary. When $\Omega$ is bounded, the well-posedness of \eqref{fp} in parabolic energy spaces are standard matter that can be found in \cite[Chapter III]{LSU}. Moreover, in this setting the conservation of mass follows easily by using the test function identically equal to 1. \\
 In this paper we work under the main assumption that $\Omega$ is unbounded and satisfies the following condition
\begin{align}\label{intcone}
&\Omega\mbox{ satisfies the interior cone condition, i.e. there exists a finite cone such that every}\\ &\mbox{ point in $\Omega$ is the vertex of a cone (congruent to the fixed given cone) contained in $\Omega$.}\notag
\end{align}
This geometric property ensures suitable extension properties of Sobolev spaces and a priori bounds for parabolic equations with divergence-type terms that give the uniqueness of solutions and the conservation of mass property, see respectively \cite[Sections 7-8]{Daners} and \cite{GoffiTralli} (see p.11 for the appropriate bounds needed to run the calculations).
All these results were obtained under the main assumption that 
\[
b\in L^\infty(Q_\tau).
\]
The regularity condition on $b$ can be considerably weakened, as discussed in \cite[Remark 2.5]{GoffiTralli}: for instance, one can assume the local Aronson-Serrin type condition $b\in L^Q_t(L^P_x)$ with $\frac{n}{2P}+\frac{1}{Q}\leq\frac12$ combined with suitable conditions at infinity.
\begin{thm}\label{propGT}[Theorem 2.4 in \cite{GoffiTralli}]
Assume that $\Omega$ satisfies \eqref{intcone}, and let $b\in L^\infty(Q_\tau)$. Then there exists a unique weak energy solution $\rho_\eps\in W$  to \eqref{fp}, where
\[
W:=\left\{f\in L^2(\tau,T;W^{1,2}(\Omega))\,\mbox{ such that }\,\partial_t f\in L^2(\tau,T;W'_\Omega)\right\},\]
$W'_\Omega$ being the dual space of $W^{1,2}(\Omega)$, endowed with the standard dual norm. In addition, if $\rho_\tau\geq0$, then $\rho_\eps(t)\geq0$ and we have
$$
\int_{\Omega}\rho_\eps(x,t)\,dx=\int_\Omega\rho_\tau(x)\,dx
$$
for $t\in(\tau,T]$.
\end{thm}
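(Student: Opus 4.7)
The plan is to proceed in three steps: existence and uniqueness via a standard energy method, nonnegativity via a Stampacchia truncation, and mass conservation via a cutoff argument exploiting the interior cone condition. The main obstacle will be the last step, since it requires a uniform $L^1(\Omega)$ bound on the solution which is not automatic when $\Omega$ is unbounded.

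For existence and uniqueness, since $b\in L^\infty(Q_\tau)$ and the principal part is the heat operator with a co-normal boundary condition, the natural variational formulation is to find $\rho\in W$ satisfying $\langle\partial_t\rho,\varphi\rangle+\eps\int_\Omega\nabla\rho\cdot\nabla\varphi\,dx-\int_\Omega\rho\,b\cdot\nabla\varphi\,dx=0$ for all $\varphi\in W^{1,2}(\Omega)$, with the boundary condition in \eqref{fp} absorbed by integration by parts. I would obtain solutions by a Galerkin scheme, using the interior cone property \eqref{intcone} to secure a bounded Sobolev extension operator $W^{1,2}(\Omega)\to W^{1,2}(\R^n)$, so that the usual embeddings and trace estimates are available uniformly on the exhausting family $\Omega\cap B_R$. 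Testing with $\rho$ itself and absorbing $\int b\rho\cdot\nabla\rho$ into $\eps\|\nabla\rho\|_{L^2}^2$ via Young's inequality yields a Gronwall bound in the $W$-norm. Uniqueness follows from the same estimate applied to the difference of two solutions starting from zero.

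Nonnegativity is a classical truncation argument: test with $-\rho^-$ where $\rho^-=\max(-\rho,0)$. Since $\rho_\tau^-\equiv 0$ and $\nabla\rho\cdot\nabla\rho^-=-|\nabla\rho^-|^2$ almost everywhere, one gets
\[
\tfrac12\|\rho^-(t)\|_{L^2(\Omega)}^2+\eps\int_\tau^t\|\nabla\rho^-\|_{L^2}^2\,ds\leq\int_\tau^t\!\!\int_\Omega b\rho^-\cdot\nabla\rho^-\,dx\,ds,
\]
which closes by Cauchy--Schwarz, Young, and Gronwall, forcing $\rho^-\equiv 0$.

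Mass conservation is the delicate point. Pick a smooth cutoff $\phi_R$ on $\R^n$ with $\phi_R\equiv 1$ on $B_R$, $\supp\phi_R\subset B_{2R}$, $|\nabla\phi_R|\leq C/R$, $|\Delta\phi_R|\leq C/R^2$, and use $\phi_R|_\Omega$ as a test function. The Neumann contribution $\int_{\partial\Omega}(\eps\partial_\nu\rho-\rho b\cdot\nu)\phi_R\,d\sigma$ vanishes by the boundary condition, and one obtains
\[
\frac{d}{dt}\int_\Omega\rho\,\phi_R\,dx=\eps\int_\Omega\rho\,\Delta\phi_R\,dx-\int_\Omega\rho\,b\cdot\nabla\phi_R\,dx,
\]
whose right-hand side is controlled by $C(\eps/R^2+\|b\|_\infty/R)\|\rho(t)\|_{L^1(\Omega)}$. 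The obstacle is precisely that the bound $\rho(t)\in L^1(\Omega)$ is not for free when $\Omega$ is unbounded. I would bootstrap it on the quantities $M_R(t):=\int_\Omega\rho\,\phi_R\,dx$: the previous identity, combined with $\rho\geq 0$, gives a differential inequality of the form $M_R(t)\leq M_R(\tau)+C(\|b\|_\infty/R+\eps/R^2)\int_\tau^t M_{2R}(s)\,ds$; iterating on the doubling $R\mapsto 2R$ and invoking Fatou yields the uniform bound $\|\rho(t)\|_{L^1(\Omega)}\leq\|\rho_\tau\|_{L^1(\Omega)}$. Inserting this back and letting $R\to\infty$ in the differential identity makes the right-hand side vanish, giving $\int_\Omega\rho(x,t)\,dx=\int_\Omega\rho_\tau(x)\,dx$ as claimed.
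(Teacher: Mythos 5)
First, a structural point: the paper does not prove this statement at all --- it is imported verbatim as Theorem 2.4 of \cite{GoffiTralli} and used as a black box, so there is no in-paper argument to measure yours against. Judged on its own terms, your treatment of existence, uniqueness and nonnegativity (Galerkin scheme with the co-normal condition absorbed into the variational formulation, energy estimates in $W$, truncation with $\rho^-$) is the standard route and is essentially fine; invoking \eqref{intcone} to get a bounded extension operator, hence uniform Sobolev embeddings on the unbounded domain, is also the right instinct.

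The gap is in the mass-conservation step, which is the only genuinely delicate part of the theorem. Your identity
\[
\frac{d}{dt}\int_\Omega\rho\,\phi_R\,dx=\eps\int_\Omega\rho\,\Delta\phi_R\,dx-\int_\Omega\rho\,b\cdot\nabla\phi_R\,dx
\]
is not what testing with $\phi_R$ gives. The weak formulation yields $\frac{d}{dt}\int_\Omega\rho\,\phi_R=-\eps\int_\Omega\nabla\rho\cdot\nabla\phi_R+\int_\Omega\rho\,b\cdot\nabla\phi_R$ (note also the sign of the drift term); to reach your form you must integrate by parts a second time, and this produces the extra boundary integral $\eps\int_{\partial\Omega}\rho\,\partial_\nu\phi_R\,d\sigma$. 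Since $\phi_R$ is a radial cutoff not adapted to $\partial\Omega$, its normal derivative does not vanish there; the surface $\partial\Omega\cap(B_{2R}\setminus B_R)$ has measure of order $R^{n-1}$, and nothing in your argument controls the trace of $\rho$ on an unbounded boundary, so this term is not $O(R^{-1})\|\rho(t)\|_{L^1(\Omega)}$. If instead you keep the term $-\eps\int_\Omega\nabla\rho\cdot\nabla\phi_R$, the energy estimate only provides $\nabla\rho\in L^2$, and the resulting bound $R^{-1}\|\nabla\rho\|_{L^2(A_R)}|A_R|^{1/2}\sim R^{n/2-1}\,o(1)$ on the annulus $A_R$ is not known to vanish as $R\to\infty$ when $n\geq3$. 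This is precisely where \eqref{intcone} must do real quantitative work: in \cite{GoffiTralli} it underlies global a priori bounds of Aronson--Serrin type for the divergence-form parabolic operator, which control the tails of $\rho$ and of its flux and allow the cutoff argument to close. Your doubling bootstrap on $M_R$ is a nice device and would deliver the uniform $L^1$ bound once the differential inequality is available, but as written the inequality itself is unjustified. A final caveat: the theorem asserts \emph{equality} of mass, so one-sided tricks (e.g. exploiting a sign of $x\cdot\nu$ on convex domains) would not suffice, and convexity is in any case not assumed in this statement.
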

\section{Rate of convergence estimates}\label{sec;rate}
In this section we consider domains with smooth boundary satisfying the following additional constraint
\begin{equation}\label{conv}
\Omega\mbox{ is convex.}
\end{equation}
We address the vanishing viscosity approximation of the following equation
\begin{equation}\label{HJ}
\begin{cases}
-\partial_t u+H(Du(x,t))=f(x,t)&\text{ in }Q_T:=\Omega\times(0,T),\\
\partial_\nu u=0& \text{ in }\Sigma_T:=\partial\Omega\times(0,T),\\
u(x,T)=u_T(x)&\text{ in }\Omega,
\end{cases}
\end{equation}
via the diffusive Cauchy-Neumann problem
\begin{equation}\label{HJvisc}
\begin{cases}
-\partial_t u_\eps-\eps\Delta u_\eps+H(Du_\eps(x,t))=f(x,t)&\text{ in }Q_T,\\
\partial_\nu u_\eps=0& \text{ in }\Sigma_T,\\
u_\eps(x,T)=u_T(x)&\text{ in }\Omega,
\end{cases}
\end{equation}
where $\Omega$ is either bounded and satisfies \eqref{conv} or it is unbounded and satisfies \eqref{intcone} and \eqref{conv}. Note that if the domain is bounded the assumption \eqref{intcone} can be dropped, as it is a consequence of the convexity of the ambient space, cf. \cite[Section 2.1]{GoffiTralli}.  In the case of \eqref{HJ}, since $u$ is not $C^1$, the Neumann condition and the notion of solution should be interpreted in the viscosity sense, see \cite{LionsDuke}. 
In this setting, the notion of viscosity solution and the well-posedness results also hold in the case of unbounded domains \cite[p. 795 and p. 806]{LionsDuke}, see also \cite[Remark p. 804]{LionsDuke} for additional comments about condition \eqref{conv}. In the sequel we will exploit the notation $u(t)=u(\cdot,t)$.\\

All the results in the paper will be stated in the form of a priori estimates and we will assume that our problem admits classical solutions $u_\eps \in C^{2,1}$. However, one needs some qualitative properties of solutions to perform rigorously the calculations below, see Remark \ref{details} for more details.

\begin{thm}\label{main1}
Let $u_\eps$ be a Lipschitz solution to the viscous equation \eqref{HJvisc} and $u$ be a Lipschitz viscosity solution to the first-order equation \eqref{HJ}, with $\Omega$ either bounded and satisfying \eqref{conv} or unbounded and satisfying \eqref{intcone} and \eqref{conv}. Assume that $H\in W^{1,\infty}_{\mathrm{loc}}(\R^n)$ and $f\in W^{1,\infty}(Q_T)$. Then
\[
\|u_\eps-u\|_{L^\infty(Q_T)}\leq M\sqrt{\eps T}.
\]
for a positive constant $M$ depending on $\|Du\|_{L^\infty(\overline{Q}_T)},n,\|Df\|_{L^\infty(Q_T)}$.
\end{thm}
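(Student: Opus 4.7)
My plan is to run an adjoint/duality argument comparing $u_\eps$ with the viscous solution $u_\delta$ for $0 < \delta < \eps$, testing a linearized equation against a Fokker--Planck density provided by Theorem~\ref{propGT}, and finally letting $\delta \to 0^+$. The rate $\mathcal{O}(\sqrt{\eps T})$ will come out of a heat-kernel-type $L^1$ gradient bound on that density; it is here that the geometric assumptions \eqref{intcone}, \eqref{conv} and the machinery of \cite{GoffiTralli} do the essential work.

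Setting $w := u_\eps - u_\delta$ and writing $H(Du_\eps) - H(Du_\delta) = V \cdot Dw$ via the fundamental theorem of calculus, with
\[
V(x,t) := \int_0^1 DH\bigl(sDu_\eps(x,t) + (1-s)Du_\delta(x,t)\bigr)\,ds \in L^\infty(Q_T)
\]
(using $H \in W^{1,\infty}_{\mathrm{loc}}$ together with the a priori Lipschitz bounds on $u_\eps,u_\delta$ uniform in the viscosity parameter, which are standard on convex domains when $u_T, f \in W^{1,\infty}$), subtracting the two viscous equations yields
\[
-\partial_t w - \eps \Delta w + V \cdot Dw = (\eps - \delta)\Delta u_\delta \ \text{in } Q_T, \quad \partial_\nu w = 0 \ \text{on } \Sigma_T, \quad w(\cdot,T) = 0.
\]

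For fixed $(x_0, t_0) \in \Omega \times [0,T)$ I would then pair $w$ with the non-negative solution $\rho$ of the forward Fokker--Planck problem \eqref{fp} on $\Omega \times (t_0,T)$ with drift $b = -V$ and initial datum $\rho_0 \in C_0^\infty(\Omega)$ a mollified Dirac at $x_0$, whose existence and mass conservation are guaranteed by Theorem~\ref{propGT}. Multiplying the equation for $w$ by $\rho$, integrating over $\Omega \times (t_0,T)$, and integrating by parts (the terminal term drops since $w(T)=0$; the interior terms involving $\rho$ match the Fokker--Planck operator; the lateral boundary terms cancel exactly because of the Neumann condition on $w$ combined with the adjoint boundary condition satisfied by $\rho$), one obtains
\[
\int_\Omega w(x,t_0)\rho_0(x)\,dx = (\eps - \delta)\int_{t_0}^T\!\!\int_\Omega \Delta u_\delta\,\rho\,dx\,dt = -(\eps - \delta)\int_{t_0}^T\!\!\int_\Omega Du_\delta \cdot D\rho\,dx\,dt,
\]
the last equality being a further integration by parts using $\partial_\nu u_\delta = 0$. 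Letting $\rho_0 \to \delta_{x_0}$ this gives
\[
|u_\eps(x_0,t_0) - u_\delta(x_0,t_0)| \leq |\eps - \delta|\,\|Du_\delta\|_{L^\infty}\int_{t_0}^T \|D\rho(\cdot,t)\|_{L^1(\Omega)}\,dt.
\]

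The decisive ingredient is then the Neumann heat-kernel gradient bound
\[
\|D\rho(\cdot,t)\|_{L^1(\Omega)} \leq \frac{C_n}{\sqrt{\eps(t-t_0)}},
\]
which for $\Omega = \R^n$ with no drift is the classical Gaussian computation, but which in the present setting (bounded drift, unbounded convex Neumann domain satisfying the interior cone property) I would extract from the differential Harnack and $L^1$-contraction estimates for Fokker--Planck flows developed in \cite{GoffiTralli}. Integrating in $t$ yields $\int_{t_0}^T \|D\rho(\cdot,t)\|_{L^1}\,dt \leq 2C_n \sqrt{(T-t_0)/\eps}$, whence $|u_\eps(x_0,t_0) - u_\delta(x_0,t_0)| \leq M|\eps-\delta|\sqrt{T/\eps}$ with $M$ depending on $n$, $\|Du\|_{L^\infty}$, $\|Df\|_{L^\infty}$ through the uniform Lipschitz bounds. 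Sending $\delta \to 0^+$ and invoking the classical locally uniform convergence $u_\delta \to u$ of the vanishing viscosity method for Neumann HJ equations on convex domains produces the desired estimate. The main obstacle, and the heart of the matter, is precisely the $L^1$ gradient bound on $\rho$ displayed above: Gaussian calculations are not available on curved or unbounded Neumann domains, so one genuinely has to exploit the geometry of $\Omega$ through \eqref{intcone}, \eqref{conv} and the Fokker--Planck machinery of \cite{GoffiTralli}.
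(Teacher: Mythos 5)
Your overall skeleton (linearize the difference of the two viscous equations, test against the adjoint Fokker--Planck density from Theorem~\ref{propGT}, let the smaller viscosity go to zero) is the same as the paper's, which works with $v^\eps=\partial_\eps u_\eps$ instead of the discrete difference $u_\eps-u_\delta$; that part is fine. The problem is the step you yourself identify as ``the heart of the matter'': after integrating by parts you need
\[
\|D\rho(\cdot,t)\|_{L^1(\Omega)}\leq \frac{C_n}{\sqrt{\eps (t-t_0)}},
\]
and this estimate is not in \cite{GoffiTralli} and is in fact \emph{false} for Fokker--Planck equations whose drift is merely bounded and measurable, which is exactly the regularity class you are in ($b=-V$ with $V$ built from $DH(Du_\eps)$, $DH$ only $L^\infty_{\mathrm{loc}}$ and $Du_\eps$ only $L^\infty$). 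The heat-kernel scaling $1/\sqrt{\eps t}$ survives a bounded drift only if the drift can be treated perturbatively, and at the vanishing-viscosity scale it cannot: a Duhamel/Gronwall iteration produces a factor $e^{C\|b\|_\infty^2 t/\eps}$, and concretely, near a concave kink of $u$ (say $u=-|x|/2$ in one dimension, so $b=-D_pH(Du_\eps)$ points toward the kink from both sides) the adjoint density concentrates into a layer of width $\mathcal{O}(\eps)$, giving $\|D\rho(\cdot,t)\|_{L^1}\sim \eps^{-1}$ for $t-t_0\gtrsim\eps$. Plugging that into your chain yields $|u_\eps-u_\delta|\lesssim(\eps-\delta)T/\eps$, i.e.\ no rate at all. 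The $L^1$-contraction of \cite{GoffiTralli} quoted in Theorem~\ref{propGT} controls only the mass $\|\rho(t)\|_{L^1}$, not $\|D\rho(t)\|_{L^1}$, and the differential Harnack (Li--Yau) estimates there concern the Neumann \emph{heat} flow, not the drifted equation.

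The paper closes the argument by keeping the second derivatives on $u_\eps$ rather than moving one onto $\rho$: it applies the Bochner identity to $w_\eps=|Du_\eps|^2$, uses the convexity of $\Omega$ to get $\partial_\nu w_\eps\leq 0$ on $\Sigma_T$, and tests against the same adjoint density to obtain the weighted second-order bound
\[
2\eps\int_\tau^T\!\!\int_\Omega |D^2u_\eps|^2\rho_\eps\,dx\,dt\leq C_L,
\]
after which Cauchy--Schwarz together with conservation of mass gives $\iint_{Q_\tau}\Delta u_\eps\,\rho_\eps\,dx\,dt\leq\sqrt{C_L n T/\eps}$ and hence the $\sqrt{\eps T}$ rate upon integrating in $\eps$. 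This is where the convexity of $\Omega$ actually enters, and it is the ingredient your proposal is missing. If you want to salvage your route, you would have to replace the $L^1$ gradient bound on $\rho$ by this kind of weighted $L^2$ Hessian bound on $u_\eps$ (or some equivalent one-sided control of $\Delta u_\eps$, which is the subject of Theorem~\ref{main2} and requires much stronger structural hypotheses on $H$ and $f$).
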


\begin{proof}
We differentiate \eqref{HJvisc} with respect to $\eps$ to find the PDE
\[
-\partial_t v^\eps-\eps\Delta v^\eps+D_pH(Du_\eps)\cdot Dv^\eps=\Delta u_\eps.
\]
equipped with the boundary conditions
\[
\partial_\nu v^\eps=0\text{ on }\Sigma_T\text{ and }v^\eps(T)=0\text{ in }\Omega.
\]
This procedure of differentiating with respect to $\eps$ can be made rigorous via Lemma 2.1 in \cite{CirantGoffiIndiana}. We introduce the adjoint problem
\begin{equation}\label{adjoint}
\begin{cases}
\partial_t\rho_\eps-\eps\Delta\rho_\eps-\mathrm{div}(D_pH(Du_\eps)\rho_\eps)=0&\text{ in }Q_\tau=\Omega\times(\tau,T)\\
\eps \partial_\nu \rho_\eps+\rho_\eps D_pH(Du_\eps)\cdot \nu=0&\text{ on }\Sigma_\tau,\\
\rho_\eps(x,\tau)=\delta_{x_0}&\text{ in }\Omega.
\end{cases}
\end{equation}
To make the next procedure rigorous, we need to consider $\rho(\tau)\in C_0^\infty(\Omega)$ satisfying $\|\rho(\tau)\|_1=1$, $\rho(\tau)\geq0$ and finally pass to the supremum over $\rho(\tau)$.
Since $H$ is locally Lipschitz and $u$ is globally Lipschitz, the velocity field $b(x,t)=-D_pH(Du_\eps)\in L^\infty(Q_\tau)$. Thus, \eqref{adjoint} admits a unique weak nonnegative solution belonging to $W$ by Theorem \ref{propGT}. By duality, this gives the representation formula
\begin{equation}\label{repr}
\int_\Omega v^\eps(\tau)\rho_\eps(\tau)\,dx=\int_\Omega v^\eps(T)\rho_\eps(T)\,dx+\iint_{Q_\tau}\Delta u_\eps\rho_\eps\,dxdt.
\end{equation}
We only need to bound the last integral of the right-hand side, since the first one vanishes due to the fact that $v^\eps(T)=0$. Standard computations through the Bochner's identity yield the evolution of $w_\eps=|Du_\eps|^2$
\[
-\partial_t w_\eps-\eps\Delta w_\eps+2\eps|D^2u_\eps|^2+D_pH(Du_\eps)\cdot Dw_\eps=2Df\cdot Du_\eps.
\]
The convexity of $\Omega$, instead, provides the inequality $\partial_\nu w_\eps\leq0$ on $\Sigma_\tau$, cf. \cite[Lemma 2.1]{GoffiTralli}. 
By duality with the adjoint problem \eqref{adjoint} and using that $\int_{\partial\Omega}\partial_\nu w_\eps\rho_\eps\leq0$, we have the inequality
\[
\int_{\Omega} w_\eps(\tau)\rho_\eps(\tau)\,dx+\int_\tau^T\int_\Omega 2\eps|D^2u_\eps|^2\rho_\eps\,dxdt\leq\int_{\Omega} w_\eps(T)\rho_\eps(T)\,dx+\int_\tau^T\int_\Omega 2Df\cdot Du_\eps\rho_\eps\,dxdt.
\]
By the Young's inequality and the conservation of mass we estimate
\[
\int_\tau^T\int_\Omega 2Df\cdot Du_\eps\rho_\eps\,dxdt\leq 2\|Df\|_{L^{\infty}(Q_\tau)}\|Du_\eps\|_{L^\infty(Q_\tau)}(T-\tau)\leq \frac{\|Du_\eps\|_{L^\infty(Q_\tau)}^2}{2}+2\|Df\|_{L^{\infty}(Q_\tau)}^2T^2.
\]
Using again the conservation of mass property in Theorem \ref{propGT} and recalling that $w_\eps=|Du_\eps|^2$, we deduce the following bound
\begin{equation}\label{lip}
|Du_\eps(x,\tau)|^2+2\eps\int_\tau^T\int_\Omega |D^2u_\eps|^2\rho_\eps\,dxdt\leq C_L
\end{equation}
for a constant $C_L$ depending on $\|Du_T\|_{L^\infty(\Omega)}, \|Df\|_{L^\infty(Q_\tau)}$. Note that this step requires the convexity of the ambient space, since we use that $\partial_\nu w_\eps\leq0$, $\partial_\nu u_\eps=\partial_\nu \rho_\eps=0$ on $\Sigma_T$, along with the nonnegativity of $\rho_\eps$ on $Q_\tau$. Therefore, we can use \eqref{lip} and Theorem \ref{propGT} to bound by the H\"older's inequality the right-hand side of \eqref{repr}:
\[
\iint_{Q_\tau}\Delta u_\eps\rho_\eps\,dxdt\leq \sqrt{n}\left(\iint_{Q_\tau}|D^2 u_\eps|^2\rho_\eps\right)^\frac12\left(\iint_{Q_\tau}\rho_\eps\right)^\frac12\leq \frac{\sqrt{C_L nT}}{\sqrt{\eps}}.
\]
Integrating in $\eps$ and noting that the right-hand side is integrable near $\eps=0$, we get for $\eps_1\geq \eps_2>0$
\[
\|(u^{\eps_1}-u^{\eps_2})(\tau)\|_{L^\infty(\Omega)}\leq M\sqrt{T}(\sqrt{\eps_1}-\sqrt{\eps_2}),
\]
where $M=2\sqrt{nC_L}$. We conclude by letting $\eps_2\to0$.
\end{proof}

\begin{rem}
In the case of bounded domains, the same result can be achieved using the standard maximum principle for parabolic equations applied to the auxiliary function \[F_\pm(x,t)=\sqrt{\eps}\partial_\eps u_\eps(x,t)\pm|Du_\eps(x,t)|^2.\] Our proof provides a global approach to this problem that avoids the use maximum principles on unbounded spaces.
\end{rem}

\begin{rem}\label{details}
Solutions $u_\eps$ of viscous Hamilton-Jacobi equations $-\partial_t u_\eps-\eps\Delta u_\eps+H(Du_\eps)=f$, $f\in W^{1,\infty}$, satisfy the following bounds independent of the viscosity
\[
\|\partial_t u_\eps\|_{L^\infty},\|u_\eps\|_{L^\infty},\|Du_\eps\|_{L^\infty}<\infty.
\]
Moreover, $u_\eps\in C^{1,1}$ independent of $\eps$ for short-time horizons or small initial data, at least for domains without boundary \cite[Proposition 12.1]{L82book}. We also have the following first- and second-order regularizing effects independent of $\eps$
\[
\|Du_\eps\|_{L^\infty}\lesssim \frac{1}{\sqrt{t}}\text{ and }\Delta u_\eps(t)\lesssim \frac{1}{t}.
\]
Estimates on space derivatives require either coercivity and/or convexity conditions or linear growth conditions on $H$; see \cite{BD,Souplet,L82book,LionsDuke}. 
 Once Lipschitz estimates are established, one can reach classical smoothness via maximal regularity properties for the heat equation: this can be done via bootstrap arguments or directly applying maximal $L^q$- regularity properties for Hamilton-Jacobi equations. The latter hold when either $H$ has at most linear growth or it has (superlinear) power-growth, see \cite{Gofficcm,GoffiPediconi} (the convexity of $\Omega$ is not needed in general when $H$ has at most natural growth). Classical regularity of solutions depends on the diffusion and hence on the viscosity $\eps$.\\
 If the domain is unbounded, we need some extra conditions on the geometry of the domain at infinity to ensure enough regularity solutions to justify the calculations. For instance, one can assume that $\Omega$ is uniformly regular of class $C^k$: this condition is stronger than \eqref{intcone}, but it represents only a qualitative assumption since it is not exploited in the derivation of our estimates. See \cite[Corollary 15.7]{Amann}, \cite{Browder} and \cite[Section 2.1]{GoffiTralli} for a complete discussion.
Examples of Lipschitz bounds in the unbounded case can be found in \cite{GoffiTralli} for quadratic Hamilton-Jacobi equations (note that they are obtained for log-solutions of the heat equation). We also refer to the procedure that leads to the second-order integral bound \eqref{lip}, which hides an a priori Lipschitz estimate independent of the viscosity. At this stage, we do not know how to remove the convexity condition on $\Omega$ to obtain the second order integral bound in \eqref{lip}. However, Lipschitz estimates are available under weaker geometric assumptions on the domain, see e.g. \cite[p. 807]{LionsDuke}.
\end{rem}

We now investigate an improvement of the convergence rate in Theorem \ref{main1}. The authors in \cite[p. 18]{PerthameSanders} highlighted that for a certain one-dimensional, quadratic, Hamilton-Jacobi equation with Neumann conditions, one should expect a rate of convergence better than the order $\mathcal{O}(\sqrt{\eps})$. Our next result confirms the observations of \cite{PerthameSanders}, as it provides an improvement when the solutions are semi-superharmonic and $H$ is quadratic, under additional conditions on $f$, cf. \eqref{condf}. In particular, it gives a quantitative statement of \cite[Theorem 8.1]{L82book}. If $f$ is semi-superharmonic and $H$ is uniformly convex, this upgrade is known under certain conditions of the state space: the paper \cite{CirantGoffiIndiana} provides two-sides $\mathcal{O}(\eps|\log \eps|)$ rates of convergence in the periodic setting, see also \cite{ChaintronDaudin} for the whole space case. Both of them exploit entropy bounds of solutions to the adjoint problem \eqref{adjoint}. The rate can be also improved under smallness conditions (e.g. short time horizons) or under geometric requirements on the solutions, at least for domains without boundary, cf. Remark 3.3 in \cite{CirantGoffiIndiana}.

\begin{thm}\label{main2}
Let $u_\eps\in W^{1,\infty}_x$ be a semi-superharmonic solution (i.e. satisfying the one-side bound $\Delta u_\eps\leq c(t),\ c\in L^1(0,T)$ independent of $\eps$) to the viscous equation \eqref{HJvisc} and $u\in W^{1,\infty}_x$ be a semi-superharmonic solution to the first-order equation \eqref{HJ}, with $\Omega$ as above. Assume that $H(p)=|p|^2$ and \begin{equation}\label{condf}\Delta f\leq c_f\text{ in $Q_T$, and }\partial_\nu f\geq0\text{ on }\Sigma_T.\end{equation} Then, for all $\beta\in(1/2,1)$, there exists $C=C(\beta)$ depending in addition on $n,c_f,T,\|Du\|_{L^\infty(\overline{Q}_T)}$,\\$\|Df\|_{L^\infty(Q_T)}$, $\|(\Delta u_T)^+\|_{L^\infty(\Omega)}$ such that
\[
-C\eps^\beta\leq u_\eps-u\leq \eps\int_0^Tc(t)\,dt,\ \beta\in(1/2,1).
\]
\end{thm}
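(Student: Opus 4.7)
The plan is to apply the adjoint framework of Theorem \ref{main1}, now exploiting the quadratic structure of $H$ and the semi-superharmonicity $\Delta u_\eps\leq c(t)$. For the upper bound, since $H(p)=|p|^2$ and $\partial_\nu u_\eps=0$, the drift $D_pH(Du_\eps)=2Du_\eps$ is tangential on $\partial\Omega$, so the adjoint boundary condition for $\rho_\eps$ from \eqref{adjoint} reduces to $\partial_\nu\rho_\eps=0$. The duality identity $v^\eps(\tau,x_0)=\iint_{Q_\tau}\Delta u_\eps\,\rho_\eps\,dxdt$ combined with $\Delta u_\eps\leq c(t)$ and the conservation of mass from Theorem \ref{propGT} yields $v^\eps(\tau,x_0)\leq \int_\tau^T c(t)\,dt$; integrating in $\eps$ from $0$ then gives the upper bound $u_\eps-u\leq \eps\int_0^T c(t)\,dt$.

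For the lower bound I would write $\Delta u_\eps=c(t)-g_\eps$ with $g_\eps:=c(t)-\Delta u_\eps\geq 0$, so that
\[
v^\eps(\tau,x_0)=\int_\tau^T c(t)\,dt-\iint_{Q_\tau} g_\eps\,\rho_\eps\,dxdt,
\]
reducing the lower bound to an estimate of the form $\iint g_\eps\rho_\eps\leq C_\beta \eps^{\beta-1}$, which after integration in $\eps$ yields the claimed $-C_\beta\eps^\beta/\beta$. The Bochner identity for $w_\eps=|Du_\eps|^2$ used in Theorem \ref{main1}, together with the pointwise inequality $|\Delta u_\eps|^2\leq n|D^2u_\eps|^2$, provides the diagonal bound $\iint g_\eps^2\rho_\eps\leq C/\eps$; by Cauchy--Schwarz this alone only yields $\iint g_\eps\rho_\eps\leq C/\sqrt{\eps}$, which merely reproduces the rate of Theorem \ref{main1}. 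The boundary sign $\partial_\nu(\Delta u_\eps)\leq 0$ on $\Sigma_T$, needed both to close a maximum-principle argument for the semi-superharmonicity of $u_\eps$ and to kill boundary terms in dualities involving $z=\Delta u_\eps$, follows by taking $\partial_\nu$ of the HJ equation and combining $\partial_\nu u_\eps=0$, the convexity inequality $\partial_\nu|Du_\eps|^2\leq 0$, and the assumption $\partial_\nu f\geq 0$ from \eqref{condf}.

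The improvement to $\eps^\beta$ must come from complementing the $L^2_{\rho_\eps}$ bound on $g_\eps$ with higher integrability of $\rho_\eps$ coming from the Fokker--Planck structure of the adjoint equation. Specifically, I would differentiate the entropy $\int\rho_\eps\log\rho_\eps$ in time and integrate by parts: the facts that $\partial_\nu\rho_\eps=0$ and $Du_\eps\cdot\nu=0$ eliminate boundary contributions, while the semi-superharmonicity $\Delta u_\eps\leq c(t)$ controls exactly the sign-indefinite drift contribution $2\int\Delta u_\eps\rho_\eps$ that arises, yielding a Fisher-information estimate $\eps\iint|D\log\rho_\eps|^2\rho_\eps\,dxdt\leq C$; equivalently, one may invoke the differential Harnack estimates for the Neumann heat flow from \cite{GoffiTralli}. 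Via Sobolev embedding applied to $\sqrt{\rho_\eps}\in H^1$, this upgrades to a quantitative $L^p$ bound on $\rho_\eps$ for some $p>1$. Interpolating the $L^2_{\rho_\eps}$ bound on $g_\eps$ against this $L^p$ bound for $\rho_\eps$ through a H\"older split with interpolation parameter chosen as a function of $\beta$ produces $\iint g_\eps\rho_\eps\leq C_\beta\eps^{\beta-1}$ for every $\beta\in(1/2,1)$; the blow-up of $C_\beta$ as $\beta\to 1$ reflects the degeneration of the H\"older exponents.

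The main obstacle is exactly this interpolation step. First, the singular Dirac initial datum $\rho_\eps(\tau)=\delta_{x_0}$ makes the entropy $\int\rho_\eps(\tau)\log\rho_\eps(\tau)$ formally $-\infty$, so one has to either regularize the adjoint with a smooth initial density and pass to the limit using the conservation of mass, or split the time interval into a short window near $\tau$ (on which the weaker $C/\sqrt{\eps}$ estimate is retained) and a bulk window where the improved Fisher/Sobolev estimates apply, then optimize over the split parameter. Second, the unbounded geometry requires that the interior cone condition \eqref{intcone} and Theorem \ref{propGT} provide enough regularity for $\rho_\eps$ to run the entropy and duality computations directly on $\Omega$, which is what permits the strategy of Theorem \ref{main1} to be reused in this larger class of domains.
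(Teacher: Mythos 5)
Your upper bound is exactly the paper's argument (duality, $\Delta u_\eps\leq c(t)$, conservation of mass, integration in $\eps$), and your reduction of the lower bound to estimating $\iint_{Q_\tau}|D^2u_\eps|\rho_\eps$ is also on target, as is the boundary-sign computation $\partial_\nu(\Delta u_\eps)\leq 0$ via the equation, convexity and \eqref{condf}. The gap is in the mechanism you propose to beat the $\mathcal{O}(1/\sqrt{\eps})$ barrier. You correctly observe that the unweighted bound $\iint|D^2u_\eps|^2\rho_\eps\leq C/\eps$ from \eqref{lip} plus Cauchy--Schwarz only reproduces Theorem \ref{main1}, but the upgrade you then sketch --- entropy/Fisher-information estimates for $\rho_\eps$, Sobolev embedding of $\sqrt{\rho_\eps}$, and an unspecified H\"older interpolation --- is precisely the route the paper declares unavailable here: the final remark states that entropy-type estimates for \eqref{adjoint} "are unknown in the context of unbounded Neumann problems." You yourself label the interpolation "the main obstacle" without resolving it, and the quantities you would feed into it degenerate as $\eps\to0$ (the Fisher bound takes the form $\eps\iint|D\log\rho_\eps|^2\rho_\eps\leq C$, so the resulting $H^1$ control on $\sqrt{\rho_\eps}$ costs a factor $1/\eps$), so it is not established that any admissible choice of exponents yields $\iint g_\eps\rho_\eps\leq C_\beta\eps^{\beta-1}$ with $\beta>1/2$.

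The paper's actual device is different and entirely elementary: a \emph{time-weighted} second-order estimate with constant independent of $\eps$. One sets $z_\eps=(t-\tau)^\alpha\Delta u_\eps$ for $\alpha\in(1,2)$; differentiating the equation twice and using Bochner's identity together with $H(p)=|p|^2$ produces the term $2(t-\tau)^\alpha|D^2u_\eps|^2$ with a favorable sign in the equation for $z_\eps$, and duality with \eqref{adjoint} (using $\partial_\nu z_\eps\leq0$, $\partial_\nu\rho_\eps=0$, $\rho_\eps\geq0$) plus Young's inequality with the algebraic bound $|\Delta u_\eps|^2\leq n|D^2u_\eps|^2$ gives
\[
\int_\tau^T\int_\Omega (t-\tau)^\alpha|D^2u_\eps|^2\rho_\eps\,dxdt\leq K,
\]
with $K$ independent of $\eps$; the weight $(t-\tau)^\alpha$ vanishing at $t=\tau$ is what neutralizes the Dirac initial datum, replacing your regularization/entropy discussion. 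The duality integral is then split at $t=\tau+\eps$: on $(\tau,\tau+\eps)$ one uses the crude $C_L/\eps$ bound times mass $\eps$ to get an $O(1)$ contribution, and on $(\tau+\eps,T)$ Cauchy--Schwarz against the weights $(t-\tau)^{\pm\alpha}$ gives $O(\eps^{(1-\alpha)/2})$; integrating in $\eps$ yields $\eps^{(3-\alpha)/2}=\eps^\beta$. Your time-splitting instinct near $\tau$ was right, but without the $\eps$-uniform weighted estimate the argument does not close.
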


\begin{rem}
Some observations about \eqref{condf} are in order. While the one-side condition $\Delta f\leq c_f$ is natural to derive unilateral second order bounds, the second condition on the boundary is in general necessary. We refer to \cite[Remark 8.2]{L82book} for some explicit examples in bounded and unbounded convex domains showing that this condition cannot be dropped. Moreover, the condition $\Delta u_\eps\leq c(t)$ hides the condition $\Delta u_T\leq M_0$ and it will be addressed in Theorem \ref{sshbounds}.
\end{rem}

\begin{proof}
We start again with the equation solved by $v^\eps$, namely
\[
-\partial_t v^\eps-\eps\Delta v^\eps+D_pH(Du_\eps)\cdot Dv^\eps=\Delta u_\eps
\]
equipped with the boundary conditions
\[
\partial_\nu v^\eps=0\text{ on }\Sigma_T\text{ and }v^\eps(T)=0\text{ in }\Omega.
\]
By duality obtained by using the test function $\rho_\eps$ solving \eqref{adjoint} in the weak formulation of $v^\eps$, we obtain the representation formula
\[
\int_\Omega v^\eps(\tau)\rho_\eps(\tau)\,dx=\int_\Omega v^\eps(T)\rho_\eps(T)\,dx+\iint_{Q_\tau}\Delta u_\eps\rho_\eps\,dxdt.
\]
In view of the standing assumptions $\Delta u_\eps\leq c(t)$ and $v^\eps(T)=0$. By the conservation of mass for the adjoint problem in Theorem \ref{propGT} and using that $\rho_\eps\geq0$, we have
\[
\iint_{Q_\tau}\Delta u_\eps\rho_\eps\,dxdt\leq \int_\tau^T\|(\Delta u_\eps(t))^+\|_{L^\infty(\Omega)}\int_\Omega\rho_\eps\,dx\,dt\leq \int_0^Tc(t)\,dt.
\]
Passing to the supremum over $\rho(\tau)\in C_0^\infty(\Omega)$ satisfying $\|\rho(\tau)\|_1=1$, $\rho(\tau)\geq0$ on the integral $\int_\Omega v^\eps(\tau)\rho_\eps(\tau)\,dx$, and then integrating with respect to $\eps$, we conclude
\[
\|(u_\eps-u)^+(\tau)\|_{L^\infty(\Omega)}\leq \eps\int_0^Tc(t)\,dt.
\]
We now prove the leftmost estimate. We first show that for $\alpha\in(1,2)$ we have the bound
\begin{equation}\label{secondord}
\int_\tau^T\int_{\Omega}(t-\tau)^\alpha|D^2u_\eps|^2\rho_\eps\, dxdt\leq K,
\end{equation}
where $\rho_\eps$ solves \eqref{adjoint} with $b(x,t)=-D_pH(Du_\eps)=-2Du_\eps$ and $K$ depends on $n, \alpha, T,u_T,f$. To this aim, we exploit the uniform convexity of the Hamiltonian. We first find, by differentiating twice the equation for $u_\eps$ and using the Bochner's identity $\Delta |Du_\eps|^2=2|D^2u_\eps|^2+2Du_\eps\cdot D\Delta u_\eps$, the following inequality solved by the function $z_\eps(x,t)=(t-\tau)^\alpha  \Delta u_\eps(x,t)$
\[
-\partial_t z_\eps-\eps\Delta z_\eps+2 (t-\tau)^\alpha  |D^2u_\eps|^2 +2Du_\eps\cdot Dz_\eps
= -\alpha (t-\tau)^{\alpha-1}\Delta u_\eps +(t-\tau)^\alpha\Delta f \quad \text{ in }Q_T.
\]
Note that since $u_\eps$ solves the Hamilton-Jacobi equation, $z$ satisfies the boundary condition
\[
\partial_\nu z_\eps=(t-\tau)^\alpha\partial_\nu  \Delta u_\eps(x,t)=(t-\tau)^\alpha\partial_\nu \left\{\frac1\eps\left(-\partial_t u_\eps+|Du_\eps|^2 -f\right)\right\}\leq0\text{ on }\Sigma_T.
\]
To see the last inequality, use the identity $\partial_\nu(\partial_t u_\eps)=0$, $\partial_\nu |Du_\eps|^2\leq0$ because $\partial_\nu u_\eps=0$ and $\Omega$ is convex, and also \eqref{condf}.
By duality and integrating in $\Omega\times(\tau,T)$, exploiting also that $\int_{\partial\Omega}\partial_\nu z_\eps \rho_\eps\leq0$, we have
\begin{align*}\label{secondorde}
\underbrace{\int_{\Omega}z_\eps(\tau)\rho_\eps(\tau)\,dx}_{=0}&+2\int_\tau^T\int_{\Omega}(t-\tau)^\alpha |D^2u_\eps|^2 \rho_\eps\,dxdt\leq \underbrace{\int_{\Omega}z_\eps(T)\rho_\eps(T)\,dx}_{\leq (T-\tau)^\alpha\|(\Delta u_T)^+\|_{L^\infty(\Omega)}}\\
&-\alpha \int_\tau^T\int_{\Omega} (t-\tau)^{\alpha-1}\Delta u_\eps \rho_\eps\,dxdt+\int_\tau^T\int_{\T^n} (t-\tau)^{\alpha} \Delta f \rho_\eps\,dxdt.
\end{align*}
Note that to integrate by parts we used the boundary conditions $\partial_\nu z_\eps\leq0$, $\partial_\nu \rho_\eps=\partial_\nu u_\eps=0$ on $\Sigma_T$, together with the fact that $\rho_\eps\geq0$ by Theorem \ref{propGT}. We now use the Young's inequality and Theorem \ref{propGT} as follows
\begin{align*}
-\alpha &\int_\tau^T\int_{\Omega} (t-\tau)^{\alpha-1} \Delta u_\eps \rho_\eps\,dxdt\\
&\leq \frac{1}{n}\int_\tau^T\int_{\Omega}(t-\tau)^\alpha|\Delta u_\eps |^2\rho_\eps\,dxdt+\frac{n\alpha^2}{4}\int_\tau^T\int_{\Omega}(t-\tau)^{\alpha-2}\rho_\eps\,dxdt\\
&\leq \int_\tau^T\int_{\Omega}(t-\tau)^\alpha|D^2u_\eps|^2\rho_\eps\,dxdt+\frac{n\alpha^2}{4}\int_\tau^T (t-\tau)^{\alpha-2}\,dt\\
&\leq \int_\tau^T\int_{\Omega}(t-\tau)^\alpha|D^2u_\eps|^2\rho_\eps\,dxdt+\frac{n\alpha^2}{4(\alpha-1)} T^{\alpha-1}.
\end{align*}
We then obtain
\begin{equation*}\label{est1}
\int_\tau^T\int_{\Omega}(t-\tau)^\alpha|D^2u_\eps|^2\rho_\eps\,dxdt
\leq \frac{n \alpha^2}{4(\alpha-1)}T^{\alpha-1}+T^\alpha\|(\Delta u_T)^+\|_{L^\infty(\Omega)}+\frac{T^{\alpha+1}}{\alpha+1}c_f=:K.
\end{equation*}
\medskip
We now conclude the estimate. Assume first that $\tau + \eps < T$. Then, back to the representation formula \eqref{repr}, we have
\begin{align*}
\left| \int_\tau^T\int_{\Omega}\Delta u_\eps\rho_\eps\,dxdt \right| &\leq \sqrt{n}\left(\int_{\tau + \eps}^{T}\int_{\Omega}(t-\tau)^{\alpha/2}|D^2u_\eps|\rho_\eps (t-\tau)^{-\alpha/2}\,dxdt+\int_{\tau}^{\tau + \eps}\int_{\Omega}|D^2u_\eps|\rho_\eps\,dxdt \right)\\
&\le \sqrt{n}\left(\int_{\tau + \eps}^{T}\int_{\Omega}(t-\tau)^{\alpha}|D^2 u_\eps|^2\rho_\eps\,dxdt \right)^\frac12\left(\int_{\tau + \eps}^{T}\int_{\Omega} (t-\tau)^{-\alpha}\rho_\eps\,dxdt\right)^\frac12\\
& \quad + \sqrt{n} \left(\int_{\tau}^{\tau + \eps}\int_{\Omega}|D^2u_\eps|^2\rho_\eps\,dxdt \right)^\frac12\left(\int_{\tau}^{\tau + \eps}\int_{\Omega}\rho_\eps\,dxdt  \right)^\frac12\\
&\leq \sqrt{\frac{nK\eps^{1-\alpha}}{\alpha-1}} + \sqrt {nC_L},
\end{align*}
where we used the estimate \eqref{est1}, the conservation of mass of the adjoint problem and \eqref{lip}. If $\tau \ge T-\eps$, then the same estimate holds, without the first constant $\sqrt{\frac{nK\eps^{1-\alpha}}{\alpha-1}}$, since in this case there is no need to split the first integral. We conclude that
\[
v^\eps(x_0, \tau) \ge - \sqrt{\frac{nK}{\alpha-1}} \eps^{\frac{1-\alpha}2} - \sqrt {nC_L}.
\]
Since the previous estimate holds for all $x_0, \tau$, we obtain, by setting $\frac{3-\alpha}2=\beta\in(1/2,1)$, 
\[
u_{\eps}-u\geq - \frac1\beta\sqrt{\frac{nK}{2(1-\beta)}} \eps^{\beta} - \sqrt {nC_L} \eps.
\]

\end{proof}

\begin{rem}
It would be worth investigating the improvement of the rate of convergence in weaker $L^1$ norms, as in \cite{CGM,LinTadmor}. In this case, starting from the identity 
\[
\int_\Omega v^\eps(\tau)\rho_\eps(\tau)\,dx=\int_\Omega v^\eps(T)\rho_\eps(T)\,dx+\iint_{Q_\tau}\Delta u_\eps\rho_\eps\,dxdt,
\]
we can estimate
\[
\iint_{Q_\tau}\Delta u_\eps\rho_\eps\,dxdt\leq \|\Delta u_\eps\|_{L^1(Q_\tau)}\|\rho_\eps\|_{L^\infty(Q_\tau)}.
\]
If the right-hand side does not depend on $\eps$ we would obtain a $\mathcal{O}(\eps)$ rate of convergence: this is known by the results in \cite{LinTadmor,CGM}. The estimate $\|\Delta u_\eps\|_{L^1(Q_\tau)}$ can be obtained on bounded convex domains \cite{L82book}, while one needs maximum principle estimates for the adjoint problem independent of the viscosity to bound $\|\rho_\eps\|_{L^\infty(Q_\tau)}$. This holds in general under the assumption that $[\mathrm{div}(b)]^-<\infty$, which is satisfied in the setting of Hamilton-Jacobi equations. For instance, if $H(Du_\eps)=|Du_\eps|^2$ we have $b=-D_pH(Du_\eps)=-2Du_\eps$ and hence
\[
-\mathrm{div}(b)=2\Delta u_\eps.
\]
Therefore, if $u_\eps$ is semi-superharmonic, the coefficient $b$ of the advection-diffusion PDE solved by $\rho_\eps$ satisfies $[\mathrm{div}(b)]^-<\infty$. A study in this direction was carried out in \cite[Lemma 3.4]{CDS} on unbounded domains when $\mathrm{div}(b)$ has two-side bounds.
\end{rem}

\begin{rem}[Towards the $\mathcal{O}(\eps|\log\eps|)$ rate of convergence] In the case of domains without boundary, recent results provided a better (two-side) $\mathcal{O}(\eps|\log\eps|)$ speed of convergence, cf. \cite{ChaintronDaudin,CirantGoffiIndiana}. These are based on entropy-type estimates for solutions to \eqref{adjoint} which are unknown in the context of unbounded Neumann problems. We do not know whether in the setting of this manuscript the speed of convergence can be improved. It is unclear whether this latter rate $\mathcal{O}(\eps|\log\eps|)$ can be boosted to a rate in $L^p$ norms, $p>1$, independent of $p$. 
\end{rem}

\section{One-side second derivative bounds for solutions of Hamilton-Jacobi equations}\label{sec;bounds}
Theorem \ref{main2} can be made unconditional using semi-superharmonic bounds for strictly convex Hamilton-Jacobi equations. We recall that under the uniform convexity condition on $H$, solutions are known to be semiconcave when $\Omega=\R^n$ or $\T^n$. However, this property remains an open question for Neumann problems posed on bounded convex domains. We show however that second order bounds on pure derivatives independent of the viscosity can be achieved for Hamilton-Jacobi Neumann boundary-value problems. The proof uses an integral approach based again on Theorem \ref{propGT} . Earlier results in this direction appeared in \cite{L82book} via maximum principle methods on bounded convex domains.\\

The first result we present shows the conservation of semi-superharmonic estimates (independent of the viscosity) from the initial datum for the Hamilton-Jacobi flow when $H=H(Du)$ has power-growth $\gamma>1$. To our knowledge this was known only for stationary equations with zero-th order coefficients posed on bounded convex domains \cite[Theorem 8.1]{L82book}.
\begin{thm}\label{sshbounds} Let $\Omega$ be an unbounded domain satisfying \eqref{intcone}-\eqref{conv} (or bounded and satisfying \eqref{conv}), and assume that $\|(\Delta u_T)^+\|_{L^\infty(\Omega)}\leq M_0$, with $H$ satisfying
\[H(p)=|p|^\gamma,\ \gamma>1.\] 
Assume also
\[
\Delta f\leq c_f(t)\in L^1(0,T)\text{ in $Q_T$ and }\partial_\nu f\geq0\text{ on }\Sigma_T.
\]
Then, any Lipschitz solution of \eqref{HJvisc} satisfies
\[
\|(\Delta u)^+(t)\|_{L^\infty(\Omega)}\leq M_0+\int_0^Tc_f(t)\,dt.
\]
Note that the estimate does not depend on $\eps$.
\end{thm}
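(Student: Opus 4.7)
The strategy is to push the adjoint method of Theorem \ref{main2} one step further: derive a linear drift-diffusion (in)equation for $w_\eps:=\Delta u_\eps$, verify that its Neumann data is nonpositive thanks to the convexity of $\Omega$ and the sign condition on $\partial_\nu f$, and then read off the pointwise upper bound by testing against the adjoint flow of Theorem \ref{propGT} with initial datum approximating a Dirac mass at an arbitrary point $x_0\in\Omega$.

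Taking $\Delta$ of \eqref{HJvisc} and using the Bochner-type identity
\[
\Delta H(Du_\eps) \,=\, D_pH(Du_\eps)\cdot D(\Delta u_\eps) \,+\, \mathrm{tr}\bigl(D^2_{pp}H(Du_\eps)(D^2u_\eps)^2\bigr),
\]
I find
\[
-\partial_t w_\eps - \eps\Delta w_\eps + D_pH(Du_\eps)\cdot Dw_\eps + \mathrm{tr}\bigl(D^2_{pp}H(Du_\eps)(D^2u_\eps)^2\bigr) \,=\, \Delta f \,\le\, c_f(t),
\]
and the trace term is nonnegative because $D^2_{pp}H\ge 0$ by convexity of $H$ (and $(D^2u_\eps)^2\ge 0$ as a square of a symmetric matrix). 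For the Neumann data, I solve the HJ equation for $\eps w_\eps$ along $\Sigma_T$ and differentiate in $\nu$, obtaining
\[
\eps\,\partial_\nu w_\eps \,=\, -\partial_t(\partial_\nu u_\eps) + \partial_\nu|Du_\eps|^\gamma - \partial_\nu f.
\]
The first summand vanishes by $\partial_\nu u_\eps=0$, the second is $\le 0$ because convexity of $\Omega$ forces $\partial_\nu|Du_\eps|^2\le 0$ (cf.\ \cite[Lemma 2.1]{GoffiTralli}), and the third is $\le 0$ by the boundary part of \eqref{condf}. Hence $\partial_\nu w_\eps\le 0$ on $\Sigma_T$.

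Fix $\tau\in[0,T)$ and $x_0\in\Omega$, and let $\rho_\eps$ solve \eqref{adjoint} with drift $-D_pH(Du_\eps)$ and a nonnegative $\rho_\tau\in C_0^\infty(\Omega)$; Theorem \ref{propGT} provides $\rho_\eps\ge 0$ with $\int_\Omega\rho_\eps(t)\,dx=\int_\Omega\rho_\tau\,dx$ for all $t\in[\tau,T]$. Computing $\frac{d}{dt}\int_\Omega w_\eps\rho_\eps\,dx$ and integrating by parts, the bulk drift terms recombine into a divergence and the $\rho_\eps$-Neumann condition kills the $w_\eps\,\partial_\nu\rho_\eps$ boundary contribution, leaving only a boundary term in $\partial_\nu w_\eps$ and a bulk term in $-\Delta f+\mathrm{tr}(\cdots)$. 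Using the three sign properties above and conservation of mass one gets
\[
\frac{d}{dt}\int_\Omega w_\eps\rho_\eps\,dx \,\ge\, -c_f(t)\int_\Omega\rho_\tau\,dx.
\]
Integrating from $\tau$ to $T$ and using $w_\eps(T)=\Delta u_T\le M_0$ gives
\[
\int_\Omega w_\eps(\tau)\rho_\tau\,dx \,\le\, \Bigl(M_0+\int_0^T c_f(t)\,dt\Bigr)\int_\Omega\rho_\tau\,dx,
\]
and choosing a standard mollifying sequence $\rho_\tau\to\delta_{x_0}$ yields $\Delta u_\eps(x_0,\tau)\le M_0+\int_0^T c_f(t)\,dt$ for arbitrary $x_0$ and $\tau$, which is the claim.

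\textbf{Main obstacle.} The delicate point is the sub-quadratic range $\gamma\in(1,2)$, where $H(p)=|p|^\gamma$ fails to be $C^2$ at the origin so that the Bochner identity above is only formal. I would bypass this by running the entire argument with the regularized Hamiltonian $H_\delta(p)=(|p|^2+\delta)^{\gamma/2}$, which remains convex so the sign of the trace term is preserved, and then send $\delta\to 0^+$ exploiting the $\eps$-independent Lipschitz bound on $u_\eps$ recalled in Remark~3.3 and the stability estimates underlying Theorem \ref{propGT}. A secondary technicality is justifying the integration by parts against $\rho_\eps$ at the regularity of $w_\eps$, which is handled exactly as in the proof of Theorem \ref{main2}.
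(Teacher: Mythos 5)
Your proposal is correct and follows essentially the same route as the paper: differentiate the equation twice to get a drift-diffusion inequality for $\Delta u_\eps$ (dropping the nonnegative Hessian term coming from the convexity of $H$), obtain $\partial_\nu(\Delta u_\eps)\le 0$ on $\Sigma_T$ from the equation restricted to the boundary together with the convexity of $\Omega$ and $\partial_\nu f\ge 0$, and conclude by duality with the adjoint Fokker--Planck problem of Theorem \ref{propGT} and its conservation of mass. Even your regularization $H_\delta(p)=(|p|^2+\delta)^{\gamma/2}$ for the subquadratic range is the one the paper uses.
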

\begin{proof}[Sketch of the proof]
We follow the proof of the Li-Yau estimate in \cite{GoffiTralli}. When $H(p)=|p|^2$, it is enough to observe that $\tilde z(x,t)=\Delta u_\eps$ satisfies the inequality
\begin{equation}\label{z}
-\partial_t \tilde z_\eps-\eps\Delta \tilde z_\eps+2|D^2u_\eps|^2+D_pH(Du_\eps)\cdot D\tilde z_\eps= \Delta f\text{ in }Q_T
\end{equation}
with the boundary condition $\partial_\nu \tilde z\leq0$ on $\Sigma_T$, as in Theorem \ref{main2}. A similar calculation in the more general case $H(Du_\eps)=|Du_\eps|^\gamma$, or its smooth approximation $H_\delta(Du_\eps)=(\delta+|Du_\eps|^2)^{\frac{\gamma}{2}}$, leads to
\[
-\partial_t \tilde z_\eps-\eps\Delta \tilde z_\eps+\gamma|Du_\eps|^{\gamma-2}Du_\eps\cdot D\tilde z_\eps\leq \Delta f.
\]
In this case $\partial_\nu \tilde z_\eps\leq0$ on $\partial\Omega$, since $\partial_\nu |Du_\eps|^\gamma\leq0$ on $\partial\Omega$ because $\Omega$ is convex. By duality with the adjoint problem \eqref{adjoint} and using its conservation of mass, it is immediate to obtain the desired estimates, which are independent of the viscosity.
\end{proof}

The second result concerns a second order regularizing effect without assuming any regularity condition on the initial datum (continuity is in general necessary for the well-posedness of solutions in the viscosity sense). This is reminiscent of the uniqueness condition for first-order uniformly convex Hamilton-Jacobi equations introduced by A. Douglis \cite{Douglis} and the one-side Lipschitz uniqueness assumption for scalar conservation laws introduced by O. Oleinik. The main novelty here is the possibility of consider unbounded ambient spaces under the Neumann condition, though the case of bounded domains seems new.
\begin{thm}
Let $\Omega$ be an unbounded domain satisfying \eqref{intcone}-\eqref{conv} (or bounded and satisfying \eqref{conv}), and assume that $f=0$ with $H$ of the form
\[H(p)=|p|^2.\] 
Then, any Lipschitz solution of \eqref{HJvisc} satisfies
\[
\|(\Delta u)^+(t)\|_{L^\infty(\Omega)}\leq \frac{n}{2(T-t)}.
\]
Note that the estimate does not depend on $\eps$.
\end{thm}
\begin{proof}
From \eqref{z} we have that $\tilde z_\eps(x,t)=\Delta u_\eps(x,t)$ solves 
\[
-\partial_t \tilde z_\eps-\eps\Delta \tilde z_\eps+2|D^2u_\eps|^2+2Du_\eps\cdot D\tilde z_\eps=0
\]
Define now $\tilde h_\eps(x,t)=(T-t)^2z_\eps(x,t)$ and discover that it solves
\[
-\partial_t \tilde h_\eps-\eps\Delta \tilde h_\eps+2(T-t)^2|D^2u_\eps|^2+2Du_\eps\cdot D\tilde h_\eps=2(T-t)\Delta u_\eps.
\]
Moreover, $\partial_\nu \tilde h_\eps\leq 0$ on $\Sigma_T$ since $\partial_\nu \tilde z_\eps\leq0$. Testing the previous equations against the solution $\rho_\eps$ of \eqref{adjoint} on $\Omega\times(\tau,T)$ and noting that $\tilde h_\eps(T)=0$, we conclude
\[
\int_\Omega \tilde h_\eps(\tau)\rho_\eps(\tau)\,dx+2\int_\tau^T\int_\Omega (T-t)^2|D^2u_\eps|^2\rho_\eps\,dxdt\leq2\int_\tau^T\int_\Omega (T-t) \Delta u_\eps\rho_\eps\,dxdt.
\]
We now bound the last integral. We have by Cauchy-Schwarz and Young's inequalities
\begin{align*}
2\int_\tau^T\int_\Omega (T-t)\Delta u_\eps\rho_\eps\,dxdt&\leq 2\sqrt{n}\int_\tau^T\int_\Omega (T-t)|D^2 u_\eps|\rho_\eps\,dxdt\\
&\leq 2\int_\tau^T\int_\Omega (T-t)^2|D^2u_\eps|^2\rho_\eps\,dxdt+\frac{n}{2}\int_\tau^T\int_\Omega \rho_\eps\,dxdt\\
&\leq 2\int_\tau^T\int_\Omega (T-t)^2|D^2u_\eps|^2\rho_\eps\,dxdt+\frac{n}{2}\int_\tau^T 1 \,dt.
\end{align*}
Therefore
\[
(T-\tau)^2\int_\Omega \Delta u_\eps(\tau)\rho_\eps(\tau)\,dx=\int_\Omega \tilde h_\eps(\tau)\rho_\eps(\tau)\,dx\leq \frac{n}{2}(T-\tau),
\]
which implies the desired estimate passing to the supremum over $\rho_\tau\geq0$.
\end{proof}


\end{document}